\newcommand{\R}{\mathbb{R}}
\newcommand{\N}{\mathbb{N}}
\newcommand{\T}{\mathbb{T}}
\newcommand{\EE}{\mathbb{E}}
\newcommand{\mA}{\mathcal A}
\newcommand{\mD}{\mathcal D}
\newcommand{\mX}{\mathcal X}
\newcommand{\mY}{\mathcal Y}
\newcommand{\beq}{\begin{equation}}
\newcommand{\eeq}{\end{equation}}
\def\a{\alpha}
\def\d{\delta}
\def\l{\lambda}
\def\r{\rho}
\def\s{\sigma}
\newcommand{\supp}{{\rm supp}}
\newcommand{\diver}{{\rm div}}
\def\pd{\partial}
\def\half{\frac{1}{2}}
\newcommand{\bphi}{\overline \phi}
\newcommand{\uphi}{\underline\phi}
\newcommand{\bpsi}{\bar \psi}
\newcommand{\sqi}{\frac{1}{\sigma^2}}
\newtheorem{theorem}{Theorem}[section]
\newtheorem{proposition}[theorem]{Proposition}
\numberwithin{equation}{section}
\title{A quadratic Mean Field Games model for the Langevin  equation}
\date{} 
\author{Fabio Camilli}
\begin{document}
\maketitle
\begin{abstract}
We consider a Mean Field Games model where the  dynamics of the agents is given by a controlled Langevin equation and  the cost is quadratic.  A    change of variables, introduced in \cite{gll}, transforms the Mean Field Games system into a  system  of two coupled kinetic Fokker-Planck equations. We prove an existence result for the latter system, obtaining consequently existence of a solution for the   Mean Field Games system. 
\end{abstract}
	
\noindent
{\footnotesize \textbf{AMS-Subject Classification:} 35K40, 91A13}.\\
{\footnotesize \textbf{Keywords:} Langevin equation; Mean Field Games system; kinetic Fokker-Planck equation}.

\section{Introduction}
The Mean Field Games (MFG in short) theory concerns the study of differential games with a large number of rational, indistinguishable agents and the characterization of the corresponding Nash equilibria.
In the original model introduced in \cite{hcm,ll}, an agent can typically  act   on its  velocity (or other first order dynamical quantities) via a control variable. Mean Field Games where   agents control the acceleration have been recently proposed in \cite{ammt,bc,cm}.\par 
A prototype of stochastic process involving acceleration is given by  the Langevin diffusion process, which can be formally defined  as
\begin{equation}\label{eq:int_Langevin}
\ddot{X}(t)=-b(X(t))+\s\dot B(t),
\end{equation}
where   $\ddot{X}$ is the second  time derivative  of the stochastic process $X$,  $B$  a 
Brownian motion and $\s$ a positive parameter. The solution of  \eqref{eq:int_Langevin} can be rewritten as a Markov process $(X,V)$ solving 
\begin{equation*} 
\left\{
\begin{array}{ll}
\dot X(t)=V(t),\\
\dot V(t)=-b(X(t))+\s\dot B(t).
\end{array}
\right.
\end{equation*}
The probability density function of  the previous process  satisfies the kinetic Fokker-Planck equation
\begin{equation*}
\pd_t p-\frac{\s^2}{2}\Delta_v p-b(x)\cdot D_v p+v\cdot D_x p=0\qquad \text{in}\quad (0,\infty)\times \R^d\times\R^d. 
\end{equation*}
The previous equation, in the case  $b\equiv 0$, was first studied by Kolmogorov \cite{K} who provided  an explicit formula for its  fundamental solution. Then considered by H\"ormander \cite{H} as  motivating example for the general theory of the hypoelliptic operators (see also \cite{AM,bou,LP}).  \par
We consider a Mean Field Games model where the dynamics of the single agent is given by a controlled Langevin diffusion process, i.e 
\begin{equation}\label{eq:int_Langevin_controlled}
\left\{
\begin{array}{ll}
\dot X(s)=V(s),\, &s\ge t\\
\dot V(s)=-b(X(s))+\a(s)+\s\dot B(s)&s\ge t\\
X(t)=x,\,V(t)=v
\end{array}
\right.
\end{equation}
In \eqref{eq:int_Langevin_controlled},    the control law $\a:[t,T]\to\R^d$, which is a progressively measurable process with respect to a fixed filtered probability space such that $\EE[\int_t^T|\a(t)|^2dt]<+\infty$, is chosen to {\it maximize} the functional
\begin{align*} 
J(t,x,v;\a)= \EE_{t,(x,v)}\Big\{\int_t^T& \left[f(X(s),V(s),m(s))-\half |\a(s)|^2\right] ds\\
&+u_T(X(T),V(T))\Big\},
\end{align*}
where $m(s)$ is the distribution of the agents at time $s$. Let $u$ the value function associated with the previous control problem, i.e.
\[
u(t,x,v)=\sup_{\a\in \mA_t}\{J(t,x,v;\a)\}
\]
where $\mA_t$ is the the set of the control laws.
 Formally, the couple $(u,m)$ satisfies the MFG system (see \cite[Section 4.1]{ammt} for more details) 
\begin{equation}\label{eq:int_MFG}
\left\{
\begin{array}{ll}
\pd_t u+\frac{\s^2}{2}\Delta_v u-b(x)\cdot D_v u+v\cdot D_x u+\half |D_vu|^2=-f(x,v,m)\\[4pt]
\pd_t m-\frac{\s^2}{2}\Delta_v m-b(x)\cdot D_v m+v\cdot D_x m+\diver_v(mD_vu)=0\\[4pt]
m(0,x,v)=m_0(x,v),\quad u(T,x,v)=u_T(x,v).
\end{array}
\right.
\end{equation}
for $(t,x,v)\in (0,T)\times\R^d\times\R^d$. The first equation is a  backward Hamilton-Jacobi-Bellman equation, degenerate in the $x$-variable and  with a quadratic Hamiltonian in the $v$ variable, and the second equation is forward kinetic Fokker-Planck equation. In the standard setting, MFG systems with quadratic Hamiltonians has been extensively considered in literature both as a reference model for the general theory and also since, thanks to the Hopf-Cole change of variable, the nonlinear Hamilton-Jacobi-Bellman equation can  be transformed into a linear equation, allowing to use all the tools developed for this type of problem  (see for example \cite{gm,Gomesbook,G,gll,ll,ugs}). Recently, a similar procedure has been  used for ergodic  hypoelliptic MFG with quadratic cost in  \cite{fgt} and  for a flocking model involving kinetic equations  in \cite[Section 4.7.3]{cd}.\\
We study    \eqref{eq:int_MFG} by means of a change of variable   introduced in   \cite{G,gll} for the standard case. By defining the new unknowns $\phi =e^{u/{\s^2}}$ and $\psi =me^{-u/ {\s^2}}$,
the system \eqref{eq:int_MFG} is transformed into a   system of two kinetic Fokker-Planck equations
\begin{equation}\label{eq:int_MFG_kinetic}
\left\{
\begin{array}{ll}
\pd_t \phi+\frac{\s^2}{2}\Delta_v \phi-b(x)\cdot D_v \phi+v\cdot D_x \phi=-\sqi  f(x,v,\psi\phi)\phi\\[4pt]
\pd_t \psi-\frac{\s^2}{2}\Delta_v \psi-b(x)\cdot D_v \psi+v\cdot D_x \psi=\sqi f(x,v,\psi\phi)\psi\\[4pt]
\psi(0,x,v)=\frac{m_0(x,v)}{\phi(0,x,v)},\quad \phi(T,x,v)=e^{\frac{ u_T(x,v)}{ \s^2}}.
\end{array}
\right.
\end{equation}
for $(t,x,v)\in (0,T)\times\R^d\times\R^d$. In the previous problem, the coupling between the two equations is only in the source terms.
Following \cite{G}, we prove existence of a (weak) solution to \eqref{eq:int_MFG_kinetic} by showing the convergence of an iterative scheme defined,  starting from $\psi^{(0)}\equiv 0$, by solving alternatively  the  backward problem
\begin{equation}\label{eq:iterate_phi}
\left\{
\begin{array}{ll}
\pd_t \phi^{(k+\half)} +\frac{\s^2}{2}\Delta_v \phi^{(k+\half)}&\hskip-16pt -b(x)\cdot D_v \phi^{(k+\half)}+v\cdot D_x \phi^{(k+\half)}\\[4pt]
&=-\sqi f(\psi^{(k)}\phi^{(k+\half)})\phi^{(k+\half)}\\[6pt]
\phi^{(k+\half)} (T,x,v)=e^{\frac{ u_T(x,v)}{ \s^2}},
\end{array}
\right.
\end{equation}
and the forward one
\begin{equation}\label{eq:iterate_psi}
\left\{
\begin{array}{ll}
\pd_t \psi^{(k+1)}-\frac{\s^2}{2}\Delta_v \psi^{(k+1)}&\hskip-34pt -b(x)\cdot D_v \psi^{(k+1)}+v\cdot D_x \psi^{(k+1)}\\[4pt]
&=
\sqi f(\psi^{(k+1)}\phi^{(k+\half)})\psi^{(k+1)}\\[6pt]
\psi^{(k+1)}(0,x,v)=\frac{m_0(x,v)}{\phi^{(k+\half)}(0,x,v)}.
\end{array}
\right.
\end{equation}
 We show that the resulting   sequence    $(\phi^{(k+\half)},\psi^{(k+1)})$, $k\in\N$, monotonically converges to the solution of \eqref{eq:int_MFG_kinetic}. Hence, by the inverse change of variable 
\begin{equation}\label{eq:change}
	  u=\frac{ \ln(\phi)}{\s^2},\qquad m=\phi\psi,
\end{equation} we  obtain a solution of the original problem \eqref{eq:int_MFG}. We have
 \begin{theorem}\label{thm:main}
 	The sequence $(\phi^{(k+\half)},\psi^{(k+1)})$ defined by \eqref{eq:iterate_phi}-\eqref{eq:iterate_psi} converges in $L^2([0,T]\times \R^d\times\R^d)$ and a.e. to a weak solution $(\phi,\psi)$ of \eqref{eq:int_MFG_kinetic}. 
 	 Moreover, the couple $(u,m)$ defined by \eqref{eq:change} is a weak solution to \eqref{eq:int_MFG}.
 \end{theorem}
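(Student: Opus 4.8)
The plan is to adapt the monotone iteration of \cite{G} to the present degenerate, kinetic setting.
\emph{Solvability and a priori bounds.} For each $k$, problems \eqref{eq:iterate_phi} and \eqref{eq:iterate_psi} are kinetic Fokker--Planck equations whose only nonlinearity sits in the zero-th order term $\sqi f(\psi^{(k)}\phi^{(k+\half)})\phi^{(k+\half)}$ (resp.\ $\sqi f(\psi^{(k+1)}\phi^{(k+\half)})\psi^{(k+1)}$). Unique solvability follows by freezing the coefficient $c=\sqi f(x,v,\psi^{(k)}\phi)$, applying the linear theory for the corresponding kinetic equation, and closing a fixed point on short time intervals (using that $f$ is bounded and Lipschitz in $m$), then iterating up to $T$. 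The comparison principle gives $\phi^{(k+\half)}\ge 0$, $\psi^{(k+1)}\ge 0$; comparing $\phi^{(k+\half)}$ with the spatially constant subsolution $t\mapsto e^{-\|u_T\|_\infty/\s^2}e^{-\|f\|_\infty(T-t)/\s^2}$ yields the uniform lower bound $\phi^{(k+\half)}\ge c_0>0$, which makes the datum $m_0/\phi^{(k+\half)}(0,\cdot)$ in \eqref{eq:iterate_psi} admissible, while comparison with constant supersolutions gives uniform upper bounds $\phi^{(k+\half)}\le C_0$, $\psi^{(k+1)}\le C_1$, all constants independent of $k$. Finally, energy estimates — multiplying the equations by $\phi^{(k+\half)}$, resp.\ $\psi^{(k+1)}$, and using that the transport part is skew-symmetric ($\diver_x v=0$ and $b=b(x)$, so the first-order terms do not contribute) together with the $L^\infty$ bound on $f$ — give, via Gronwall, uniform bounds in $L^\infty(0,T;L^2)\cap L^2(0,T;H^1_v)$, providing the compactness needed below.

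\emph{Monotonicity and convergence.} This is the core. Denote by $\Phi[\psi]$ the solution of \eqref{eq:iterate_phi} associated with a given $\psi$ and by $\Psi[\phi]$ the solution of \eqref{eq:iterate_psi} associated with a given $\phi$. Using the comparison principle and the monotonicity of $m\mapsto f(x,v,m)$, both maps are order reversing: $\psi_1\le\psi_2$ implies $\Phi[\psi_1]\ge\Phi[\psi_2]$, and $\phi_1\le\phi_2$ implies $\Psi[\phi_1]\ge\Psi[\phi_2]$ (a larger $\phi$ makes both the datum $m_0/\phi(0,\cdot)$ and, through the monotonicity of $f$, the source smaller). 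The circular dependence stemming from $f$ being evaluated at the product $\psi\phi$ is dealt with by testing the equation for the difference of two solutions with its positive part. Hence $\psi^{(k)}\mapsto\psi^{(k+1)}=\Psi[\Phi[\psi^{(k)}]]$ is order preserving; since $\psi^{(1)}\ge0=\psi^{(0)}$, induction shows that $(\psi^{(k)})_k$ is nondecreasing and $(\phi^{(k+\half)})_k$ nonincreasing. Being monotone and uniformly bounded, both sequences converge a.e.\ to limits $\phi,\psi$ with $c_0\le\phi\le C_0$, $0\le\psi\le C_1$; dominated convergence (with the uniform bounds and the integrability of the data) upgrades this to convergence in $L^2([0,T]\times\R^d\times\R^d)$.

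\emph{Passage to the limit and recovery of the MFG system.} In the weak formulation of \eqref{eq:iterate_phi}--\eqref{eq:iterate_psi} the $\Delta_v$ and $v\cdot D_x$ terms are moved onto the test function; the linear terms then pass to the limit by the $L^2$ convergence of $\phi^{(k+\half)},\psi^{(k+1)}$ (and the weak $L^2$ convergence of their $v$-gradients), while $f(\psi^{(k)}\phi^{(k+\half)})\phi^{(k+\half)}\to f(\psi\phi)\phi$ and the corresponding term in the $\psi$-equation converge by the a.e.\ convergence, the continuity of $f$ and dominated convergence; the terminal datum $\phi^{(k+\half)}(T,\cdot)=e^{u_T/\s^2}$ is fixed and $\phi^{(k+\half)}(0,\cdot)\to\phi(0,\cdot)\ge c_0$, so the initial datum of $\psi$ passes as well. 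Thus $(\phi,\psi)$ is a weak solution of \eqref{eq:int_MFG_kinetic} with $\phi\ge c_0>0$, so the pair $(u,m)$ of \eqref{eq:change} (with $u=\s^2\ln\phi$ the inverse of $\phi=e^{u/\s^2}$, and $m=\phi\psi$) is well defined, $m\ge0$, $m(0,\cdot)=m_0$, $u(T,\cdot)=u_T$, and $D_v u=\s^2 D_v\phi/\phi$. Dividing the $\phi$-equation by $\phi$ and using $\Delta_v\phi/\phi=\Delta_v(\ln\phi)+|D_v\ln\phi|^2$ recovers the Hamilton--Jacobi--Bellman equation of \eqref{eq:int_MFG}; a direct computation combining the $\phi$- and $\psi$-equations gives $\pd_t(\phi\psi)-\frac{\s^2}{2}\Delta_v(\phi\psi)-b\cdot D_v(\phi\psi)+v\cdot D_x(\phi\psi)+\diver_v(\phi\psi\,D_v u)=0$, i.e.\ the Fokker--Planck equation for $m$.

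\emph{Main obstacle.} The delicate point is the monotonicity step: the comparison/maximum principle behind it is far less routine for the \emph{degenerate, hypoelliptic} kinetic operators than in the uniformly parabolic case, and one must simultaneously control the circular coupling through the product $\psi\phi$. The remaining steps reduce to the linear and energy theory of kinetic Fokker--Planck equations, together with standard compactness and chain-rule arguments.
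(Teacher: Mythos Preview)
Your overall strategy --- monotone iteration \`a la Gu\'eant, antitone maps $\Phi,\Psi$, pointwise and $L^2$ convergence, then passage to the limit --- is exactly the one the paper uses. The genuine gap is your claimed \emph{uniform} lower bound $\phi^{(k+\half)}\ge c_0>0$ via the ``spatially constant subsolution $e^{-\|u_T\|_\infty/\s^2}e^{-\|f\|_\infty(T-t)/\s^2}$''. Under the paper's hypothesis \eqref{hyp:i_u} the terminal datum satisfies $u_T(x,v)\le -C_1(|v|^2+|x|)+C_1$, hence $u_T\to-\infty$ and $e^{u_T/\s^2}\to 0$ at infinity; $\|u_T\|_\infty$ is not finite and no positive constant can lie below $\phi(T,\cdot)$. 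The correct subsolution is the Gaussian-type barrier $\underline\phi(t,x,v)=e^{(-C_0(|v|^2+|x|+1)-\rho(T-t))/\s^2}$ (with $\rho$ large), which the kinetic operator respects and which only yields the \emph{local} bound $\phi^{(k+\half)}\ge C_R>0$ on each ball $B(0,R)$. (Your assumption that $f$ is Lipschitz in $m$ is also not part of the paper's hypotheses; the paper gets existence for each iterate by a Schauder fixed point in $L^2$, not by a short-time contraction.)

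This local-versus-global distinction propagates through the rest of the argument. First, the admissibility of the initial datum $m_0/\phi^{(k+\half)}(0,\cdot)$ does not follow from a global lower bound but from the compact support assumption on $m_0$ in \eqref{hyp:i_m}: one fixes $R>R_0$ and uses $\phi^{(k+\half)}(0,\cdot)\ge C_R$ on $\supp m_0$. Second, since $\phi$ is only bounded away from zero locally, the inverse change $u=\s^2\ln\phi$ produces a function with $u$, $\pd_t u+v\cdot D_xu$, $D_vu$, $\Delta_vu$ only in $L^2_{\mathrm{loc}}$, not $L^2$; the paper obtains these by first invoking Bouchut's hypoelliptic regularity (so that $\Delta_v\phi\in L^2$, not merely $D_v\phi\in L^2$) and then using the local lower bound on $\phi$. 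Your formal step ``divide the $\phi$-equation by $\phi$'' is precisely where this regularity is needed to make the chain-rule manipulation and the weak formulation of the HJB equation rigorous. Once you replace the false global bound by the local one and track the $L^2_{\mathrm{loc}}$ regularity accordingly, your outline becomes the paper's proof.
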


The previous iterative procedure also  suggests a monotone numerical   method for the approximation of    \eqref{eq:int_MFG_kinetic}, hence for \eqref{eq:int_MFG}. Indeed, by approximating \eqref{eq:iterate_phi}	and \eqref{eq:iterate_psi} by finite differences and solving alternatively the resulting   discrete equations, we obtain an approximation of the  sequence $(\phi^{(k+\half)},\psi^{(k+1)})$. A corresponding procedure for the standard  quadratic MFG system was studied  in \cite{G}, where the convergence of the method is proved.	We plan to study the properties of the previous numerical procedure in a future work.

\section{Well posedness of the  kinetic Fokker-Planck system}
In this section, we study the existence of a solution to  system \eqref{eq:int_MFG_kinetic}. The  proof of the  result follows the strategy implemented in \cite[Section 2]{G} for the case of a standard   MFG system with quadratic Hamiltonian    and relies on the results for   linear kinetic Fokker-Planck equations   in \cite[Appendix A]{d}.\\
 We fix the assumptions we will assume in all the paper. 
The vector field $b: \R^{d} \to\R^d$ and the coupling cost $f:\R^d
\times \R^d\times\R\to\R$  are assumed  to satisfy
\begin{eqnarray*} 
&b\in L^\infty(\T^{d}), \\
&\text{$f\in L^\infty(\R^d\times\R^d\times\R)$,   $f\le 0$ and $f(x,v,\cdot)$ strictly decreasing.}
\end{eqnarray*}
Moreover, the diffusion coefficient  $\s$ is   positive and   the initial and terminal data satisfy
\begin{equation}\label{hyp:i_m}
	\begin{aligned} 
		\,m_0\in L^\infty(\R^d\times\R^d),\, m_0\ge 0,\,\iint m_0(x,v)dxdv=1,\\
		\text{and $\exists\, R_0>0$ s.t. $\supp\{m_0\}\subset \R^d\times B(0,R_0)$}
	\end{aligned}
\end{equation}
and
\begin{equation}\label{hyp:i_u}
\begin{aligned} 
u_T\in C^0(\R^d\times\R^d)\,\text{ and $\exists\,C_0$, $C_1>0$ s.t. $\forall (x,v)\in \R^d\times\R^d$}\\
-C_0(|v|^2+|x|)-C_0\le u_T(x,v)\le -C_1(|v|^2+|x|)+C_1.
\end{aligned}
\end{equation} 
Note that \eqref{hyp:i_u} implies that $e^{u_T/\s^2}\in L^\infty(\R^d\times\R^d)\cap L^2(\R^d\times\R^d)$.
We denote  with $(\cdot,\cdot)$ the scalar product in $L^2([0,T]\times \R^d\times\R^d)$  and with $\langle\cdot,\cdot\rangle$   the pairing between $\mX=L^2([0,T]\times\R^d_x;H^1(\R^d_v))$ and its dual $\mX'=L^2([0,T]\times\R^d_x;H^{-1}(\R^d_v))$. We define the following functional space
\begin{equation*}
\mY=\left\{g\in L^2([0,T]\times\R^d_x, H^1(\R^d_v)),\partial_t g+v\cdot D_x g\in L^2([0,T]\times\R^d_x, H^{-1}(\R^d_v))\right\}
\end{equation*}
and  we set $\mY_0=\{g\in \mY:\,g\ge 0 \}$.  If $g\in \mY$, then it admits   (continuous)  trace values $g(0,x,v)$, $g(T,x,v)\in L^2(\R^d\times\R^d)$
(see \cite[Lemma A.1]{d}) and therefore   the initial/terminal conditions for \eqref{eq:int_MFG_kinetic} are well defined in $L^2$ sense. 
We first prove the well posedness of problems \eqref{eq:iterate_phi} and \eqref{eq:iterate_psi}.
\begin{proposition}\label{prop:well_posed_eq_phi}
We have
\begin{itemize}
\item[(i)] For any $\psi\in\mY_0$, there exists a unique solution $\phi\in\mY_0$ to	
\begin{equation}\label{eq:equation_phi}
\left\{
\begin{array}{ll}
\pd_t \phi+\frac{\s^2}{2}\Delta_v \phi-b(x)\cdot D_v \phi+v\cdot D_x \phi=-\sqi f(x,v,\psi\phi)\phi\\[4pt]
\phi(T,x,v)=e^{\frac{ u_T(x,v)}{ \s^2}}.
\end{array}
\right.
\end{equation}
Moreover, $\phi\in L^\infty([0,T]\times \R^d\times\R^d)$ and, for any  $R>0$, there exist $\d_R\in\R$ and $\rho>0$ such that
  \begin{equation}\label{eq:lower_bound_phi}
	\phi(t,x,v)\ge C_R:= e^{\sqi(\d_R-\rho T)}\quad\forall t\in [0,T],\, (x,v)\in B(0,R)\subset \R^d\times\R^d.
\end{equation}
\item[(ii)] Let $\Phi: \mY_0\to\mY_0$ be the map which associates	to $\psi$ the unique solution of \eqref{eq:equation_phi}.
	Then, if $\psi_2 \le\psi_1$, we have $\Phi(\psi_2)\ge \Phi(\psi_1)$.
\end{itemize}
\end{proposition}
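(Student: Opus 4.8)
The plan is to treat \eqref{eq:equation_phi} as a semilinear perturbation of a linear kinetic Fokker-Planck equation and to combine the linear theory from \cite[Appendix A]{d} with a fixed-point / monotone-iteration argument. For part (i), first note that since $\psi\ge 0$ and $f\le 0$ with $f$ bounded, the zeroth-order coefficient $c(t,x,v):=-\sqi f(x,v,\psi\phi)$ is nonnegative and bounded by $\sqi\|f\|_\infty$; the subtlety is that it depends on the unknown $\phi$ through $\psi\phi$. I would therefore set up a Banach fixed point (or Schauder, on a suitable bounded convex set of $\mY_0$): given $\bar\phi\in\mY_0$, solve the \emph{linear} backward problem with zeroth-order term $-\sqi f(x,v,\psi\bar\phi)$ and terminal datum $e^{u_T/\s^2}$, which by \cite[Appendix A]{d} has a unique solution $\phi\in\mY$; call this map $T\bar\phi=\phi$. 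The terminal datum lies in $L^\infty\cap L^2$ by the remark after \eqref{hyp:i_u}. Uniqueness and the a priori bounds come from energy estimates after testing the equation against $\phi$ (and against $\phi_-$ for nonnegativity), using that $v\cdot D_x$ is skew-adjoint so its contribution vanishes, and Gr\"onwall in the (backward) time variable. The $L^\infty$ bound follows by a comparison/truncation argument: since $c\ge 0$, the constant $\|e^{u_T/\s^2}\|_\infty$ is a supersolution, so $0\le\phi\le\|e^{u_T/\s^2}\|_\infty$; this also provides the uniform bound needed to make $T$ a self-map of a fixed ball and to close the contraction using the Lipschitz-type control of $f$ on bounded sets (or, if $f$ is only monotone and bounded, via a monotone iteration using part (ii)'s comparison principle to pass to the limit).

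For the lower bound \eqref{eq:lower_bound_phi}, fix $R>0$. The idea is to build an explicit subsolution. Because $\phi\ge 0$ is a solution of the linear equation $\pd_t\phi+\tfrac{\s^2}{2}\Delta_v\phi - b\cdot D_v\phi + v\cdot D_x\phi = c\,\phi$ with $0\le c\le\rho:=\sqi\|f\|_\infty$, the function $w:=e^{\sqi\rho(T-t)}\phi$ — or rather a suitably rescaled comparison — satisfies a pure kinetic Fokker-Planck inequality with no zeroth-order term; equivalently it suffices to get a positive lower bound for the solution of the linear problem with $c\equiv 0$ and then track the harmless exponential factor $e^{-\sqi\rho T}$. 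For that linear problem one invokes the strong maximum principle / Harnack-type positivity for hypoelliptic Kolmogorov operators (Kolmogorov \cite{K}, H\"ormander \cite{H}; or the quantitative versions underlying \cite[Appendix A]{d}): a nonnegative solution that is not identically zero, with terminal datum $e^{u_T/\s^2}>0$ everywhere, is strictly positive on compact sets, uniformly in $t\in[0,T]$; define $e^{\sqi\d_R}$ to be that infimum over $[0,T]\times B(0,R)$. Combining, $\phi(t,x,v)\ge e^{\sqi(\d_R-\rho T)}=:C_R$ on $[0,T]\times B(0,R)$.

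For part (ii), this is a comparison principle for \eqref{eq:equation_phi} with respect to the datum $\psi$. Let $\phi_i=\Phi(\psi_i)$ for $i=1,2$ with $\psi_2\le\psi_1$, and set $\eta=\phi_1-\phi_2$. Subtracting the two equations, $\eta$ solves a linear kinetic Fokker-Planck equation with zero terminal datum and a source that, after adding and subtracting, can be written using monotonicity of $f(x,v,\cdot)$ and positivity of $\phi_1,\phi_2,\psi_1,\psi_2$ as a term of the form (bounded coefficient)$\cdot\eta$ plus a sign-definite term: specifically $-\sqi\big(f(\psi_1\phi_1)-f(\psi_2\phi_2)\big)$, which I split as $-\sqi\big(f(\psi_1\phi_1)-f(\psi_1\phi_2)\big)\phi_1 - \sqi f(\psi_1\phi_2)\eta + \sqi\big(f(\psi_1\phi_2)-f(\psi_2\phi_2)\big)\phi_1$; since $\psi_1\ge\psi_2\ge0$ and $\phi_2\ge0$, the last bracket has the sign of $f$ decreasing in the argument and contributes with the favorable sign, while the first bracket produces a factor multiplying $\eta$ of a definite sign. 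Testing the resulting inequality for $\eta_+$ (or $\eta_-$) and using Gr\"onwall backward in time then forces $\eta\ge0$, i.e. $\Phi(\psi_2)\ge\Phi(\psi_1)$. The main obstacle I anticipate is handling the implicit, nonlinear dependence of the zeroth-order term on $\psi\phi$ cleanly: one must argue that all the energy estimates and comparison manipulations are legitimate in the weak ($\mY$) formulation — in particular that $\phi_\pm\in\mX$ is an admissible test function and that the trace terms from integrating $\pd_t\phi+v\cdot D_x\phi$ against $\phi_\pm$ are controlled via \cite[Lemma A.1]{d} — and that the fixed-point map is well-defined despite $f$ being merely bounded and monotone rather than globally Lipschitz, which is where the monotone iteration (rather than a naive contraction) and the uniform $L^\infty$ bound do the real work.
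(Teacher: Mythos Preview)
Your overall strategy matches the paper's: linearize and apply the kinetic Fokker--Planck theory of \cite[Appendix~A]{d}, obtain existence by a fixed point, nonnegativity and comparison by testing against $\phi^-$ or $(\phi_1-\phi_2)^+$, and bounds via barriers. A few points of divergence are worth noting.

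For existence you hedge between a contraction and a monotone iteration, correctly flagging that $f$ is only bounded and strictly monotone, not Lipschitz. The paper resolves this by Schauder: the map $F$ sending $\varphi$ to the solution of the linear problem with source $-\sqi f(\psi\varphi)\phi$ is continuous on a ball of $L^2$ (dominated convergence on the right-hand side) and compact by the embedding result of \cite{cep}; no Lipschitz control is needed. Uniqueness is then a separate step, obtained by testing the difference of two solutions against itself after an $e^{\lambda t}$ shift and invoking the \emph{strict} monotonicity of $f$; you do not address uniqueness explicitly, and it does not follow from your existence scheme.

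For the lower bound \eqref{eq:lower_bound_phi} the paper does not appeal to Harnack or the strong maximum principle for Kolmogorov operators, which are not in its toolbox. Instead it builds the explicit subsolution
\[
\underline\phi(t,x,v)=e^{\bigl(-C_0(|v|^2+|x|+1)-\rho(T-t)\bigr)/\s^2},
\]
using the growth hypothesis \eqref{hyp:i_u} on $u_T$, and checks by direct computation that for $\rho$ large the kinetic operator applied to $\underline\phi$ has the right sign; the comparison principle of \cite[Prop.~A.3]{d} then gives $\phi\ge\underline\phi$, uniformly in $\psi$. Your Harnack route is plausible in principle but heavier, and as written the rescaling step is imprecise. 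Your constant supersolution for the $L^\infty$ upper bound is in fact valid and even simpler than the paper's time-dependent choice. For part~(ii) your plan coincides with the paper's, modulo algebra: the paper uses the same $e^{\lambda t}$ shift in place of Gr\"onwall and the two-term splitting
\[
f(\phi_1\psi_1)\phi_1-f(\phi_2\psi_2)\phi_2=f(\phi_1\psi_1)(\phi_1-\phi_2)+\bigl(f(\phi_1\psi_1)-f(\phi_2\psi_2)\bigr)\phi_2,
\]
each piece having the right sign when paired with $(\phi_1-\phi_2)^+$; your three-term splitting as written contains sign and index errors but is aiming at the same thing.
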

\begin{proof}
Fixed $\psi\in\mY_0$, consider the map $F=F(\varphi)$ from $L^2([0,T]\times \R^d\times\R^d)$ into itself 
that associates  with $\varphi$ the weak solution $\phi\in L^2([0,T]\times \R^d\times\R^d)$ of the linear problem
\begin{equation}\label{eq:equation_phi_1}
\left\{
\begin{array}{ll}
\pd_t \phi+\frac{\s^2}{2}\Delta_v \phi-b(x)\cdot D_v \phi+v\cdot D_x \phi=-\sqi f(\psi\varphi)\phi\\[4pt]
\phi(T,x,v)=e^{\frac{ u_T(x,v)}{ \s^2}}.
\end{array}
\right.
\end{equation}
By \cite[Prop. A.2]{d},   $\phi$ belongs to $\mY$ and  it coincides with  the unique solution of  \eqref{eq:equation_phi_1} in this space. Moreover, the following estimate 
\begin{equation}\label{eq:energy_est}
\|\phi\|_{L^2([0,T]\times\R^d_x;H^1(\R^d_v))}+\|\pd_t\phi +v\cdot D_x\phi\|_{L^2([0,T]\times\R^d_x;H^{-1}(\R^d_v))}\le C
\end{equation}
holds for some constant $C$ which depends only on $\|e^{u_T/\s^2}\|_{L^2} $, $\|f\|_{L^\infty}$ and $\s $. Hence
$F$ maps $B_C$, the closed ball of radius $C$ of $L^2([0,T]\times \R^d\times\R^d)$, into itself.\\
To show that the map $F$ is continuous on $B_C$, consider    $\{\varphi_n\}_{n\in\N},\,\varphi\in L^2([0,T]\times \R^d\times\R^d)$ such that
$\|\varphi_n-\varphi\|_{L^2}\to 0$ and set  $\phi_n=F(\varphi_n)$. Then $\phi_n  \in \mY$, and, by the estimate
\eqref{eq:energy_est}, we get that, up to a subsequence, there exists $\bphi\in \mY$ such that
$\phi_n\to \bphi$, $D_v\phi_n\to D_v\bphi$ in $L^2([0,T]\times \R^d\times\R^d)$, $\pd_t\phi_n +v\cdot D_x\phi_n \to \pd_t\bphi_n +v\cdot D_x\bphi_n$ in $L^2([0,T]\times\R^d_x;H^{-1}(\R^d_v))$. Moreover $\varphi_n\to\varphi$ almost everywhere.  By the definition of weak solution to \eqref{eq:equation_phi_1},  we have that
\begin{equation}\label{eq:weak}
\langle\pd_t \phi_n+v\cdot D_x\phi_n,w \rangle - 
\frac{\s^2}{2}(D_v\phi_n,D_vw) -(b\cdot D_v\phi_n,w)=(-\frac{1}{\s^2}\phi_n F(\varphi_n \psi),w),
\end{equation}
for any $w\in \mD([0,T]\times \R^d\times\R^d)$, the space of infinite differentiable functions with compact support in $[0,T]\times \R^d\times\R^d$. 
Employing weak convergence for left hand side of \eqref{eq:weak} and the Dominated Convergence Theorem for the right hand one, we get for $n\to\infty$
\[
\langle\pd_t \bphi+v\cdot D_x\bphi,w \rangle - 
\frac{\s^2}{2}(D_v\bphi ,D_vw) -(b\cdot D_v\bphi,w)=(-\bphi  F(\varphi  \psi),w)
\]
for any $w\in \mD([0,T]\times \R^d\times\R^d)$. Hence $\bphi=F(\varphi)$ and $F(\varphi_n)\to F(\varphi)$ for $n\to\infty$ in $L^2([0,T]\times \R^d\times\R^d)$. 
The compactness of the map $F$ in $L^2([0,T]\times \R^d\times\R^d)$ follows by  the compactness of the set of the solutions to \eqref{eq:equation_phi_1}, see \cite[Theorem 1.2]{cep}. We conclude, by Schauder's Theorem,  that there exists a fixed-point of  the map $F$ in $L^2$, hence in $\mY$,  and  therefore a   solution to the nonlinear parabolic equation \eqref{eq:equation_phi}.\par
Observe that, if $\phi$ is a solution of
\eqref{eq:equation_phi}, then $\tilde \phi=e^{\l t}\phi$ is a solution of
\begin{equation}\label{eq:equation_phi_equiv}
\pd_t \tilde\phi+\frac{\s^2}{2}\Delta_v \tilde\phi-b(x)\cdot D_v \tilde\phi+v\cdot D_x\tilde \phi-\l\tilde \phi=-\sqi  f(e^{-\l t}\psi\tilde\phi)\tilde\phi
\end{equation}
with the corresponding final condition. In the following, we assume that $\lambda>0$.
To show that $\phi$ is non negative, we will exploit the following property (see \cite[Lemma A.3]{d}): given $\phi\in \mY$ and defined $\phi^{\pm} =\max(\pm \phi ,0)$, then $\phi^\pm\in \mX$ and
\begin{equation}\label{eq:property_phi_minus}
\langle \pd_t \phi+v\cdot D_x\phi,\phi^-\rangle=\half\left(\iint|\phi(0,x,v)^-|^2dxdv-\iint|\phi(T,x,v)^-|^2dxdv\right).
\end{equation}
Let $\phi$ be a solution of \eqref{eq:equation_phi_equiv}, multiply the equation by $\phi^-$ and integrate. Then, since $\phi(T,x,v)$ is non negative, by \eqref{eq:property_phi_minus} we get
\begin{align*}
	-\sqi (\phi f(e^{\l t}\phi \psi),\phi^-) =\langle\pd_t \phi+v\cdot D_x\phi,\phi^-\rangle-\\
	\frac{\s^2}{2}(D_v\phi,D_v\phi^-) -(b\cdot D_v\phi,\phi^-) -\l (\phi,\phi^-) =\\
	\half \iint|\phi(0,x,v)^-|^2dxdv+\frac{\s^2}{2}(D_v\phi^-,D_v\phi^-) +  
	\l (\phi^-,\phi^-) \ge \\ 
	 \l  (\phi^-,\phi^-),
\end{align*}
where it has been exploited that, by integration by parts,   $(b\cdot D_v\phi,\phi^-)=0$. Since $f\le 0$ and therefore
\[
-(\phi f(e^{\l t}\phi \psi),\phi^-) =(\phi^- f(e^{\l t}\phi \psi),\phi^-) \le 0,
\]
we get  $(\phi^-,\phi^-)\equiv 0$ , hence  $\phi\ge0$ .

To prove the uniqueness of the solution to \eqref{eq:equation_phi}, consider two solutions $\phi_1$, $\phi_2$ of \eqref{eq:equation_phi_equiv} and set $\bphi=\phi_1-\phi_2$. Multiplying the equation for $\bphi$    by $\bphi$, integrating and using $\bphi(x,v,T)=0$, we get
\begin{equation}\label{eq:uniq_phi}
	\begin{aligned}
		-\sqi(  f(e^{-\l t}\psi\phi_1)\phi_1- f(e^{-\l t}\psi\phi_2)\phi_2, \phi_1-\phi_2) =\langle\pd_t \bphi+v\cdot D_x\bphi,\bphi\rangle -\\
		\frac{\s^2}{2}(D_v\bphi,D_v\bphi) -(b\cdot D_v\bphi,\bphi) -\l (\bphi,\bphi) =\\
		-\half\iint|\bphi(x,v,0)|^2dxdv-\frac{\s^2}{2}(D_v\bphi,D_v\bphi)   -\l (\bphi,\bphi) \le 
		-\l(\phi_1-\phi_2,\phi_1-\phi_2)  
	\end{aligned}
\end{equation}
and, by the strict monotonicity of $f$, we conclude that
 $\phi_1=\phi_2$ .\par

To  prove that $\phi$ is bounded from above, we observe that the function $\bphi(t,x,v)=e^{C_1+(T-t)\|f\|_{\infty}/\s^2}$, where $C_1$ as in \eqref{hyp:i_u}, is a supersolution of the linear problem \eqref{eq:equation_phi_1} for any $\varphi\in L^2([0,T]\times \R^d\times\R^d)$, i.e.
$\phi(T,x,v)\ge e^{u_T(x,v)/\s^2}$ and 
\[\pd_t \bphi+\frac{\s^2}{2}\Delta_v \bphi-b(x)\cdot D_v \bphi+v\cdot D_x \bphi\le -\sqi f(\psi\varphi)\bphi.\]
By the Maximum Principle (see \cite[Prop. A.3 (i)]{d}), we get that $\bphi\ge \phi$, where $\phi$ is the solution of \eqref{eq:equation_phi_1}. Since the previous property holds for any $\varphi\in L^2([0,T]\times \R^d\times\R^d)$, we conclude that   $\bphi\ge \phi$, where $\phi$ is the solution of the nonlinear problem \eqref{eq:equation_phi}.\\
A similar argument allows to show that $\underline\phi(x,v,t)=e^{(-C_0(|v|^2+|x|+1)-\rho(T-t))/\s^2}$, where $C_0$ as in \eqref{hyp:i_u} and $\rho$ sufficiently large, is a subsolution of \eqref{eq:equation_phi_1} for any $\varphi\in L^2([0,T]\times \R^d\times\R^d)$. Indeed, replacing $\underline\phi$ in the equation, we get that the inequality
\begin{align*}
	\pd_t \uphi+\frac{\s^2}{2}\Delta_v \uphi-b(x)\cdot D_v \uphi+v\cdot D_x \uphi=\\
	=\frac{\uphi}{\s^2}\left(\rho- C_0d \s^2+ 2C_0^2 \s^2|v|^2+2C_0b(x)\cdot v-C_0v\cdot \frac{x}{|x|}\right)\ge \\-\frac {1 }{\s^2}  f(\psi\varphi)\uphi
\end{align*}
is satisfied for $\r$ large enough and, moreover, $\uphi\le e^{u_T(x,v)/\s^2}$. Hence $\underline \phi\le \phi$,  where $\phi$ is the solution of the nonlinear problem \eqref{eq:equation_phi}, and, from this estimate, we deduce \eqref{eq:lower_bound_phi}.\par
We finally prove the monotonicity of the map $\Phi$. 
Set $\phi_i=\Phi(\psi_i)$, $i=1,2$, and consider the equation satisfied by $\bphi=e^{\l t}\phi_1-e^{\l t}\phi_2$, multiply it by $\bphi^+$ and integrate. Performing a computation similar to \eqref{eq:uniq_phi}, we get
\begin{align*}
	-\sqi (f(\phi_1\psi_1)\phi_1-f(\phi_2\psi_2)\phi_2,\bphi^+)\le - \l(\bphi^+,\bphi^+).
\end{align*}
Since, by monotonicity of $f$ and  non negativity of $\phi_i$, we have
\begin{align*}
	-(f(\phi_1\psi_1)\phi_1-f(\phi_2\psi_2)\phi_2,\bphi^+)= -(f(\phi_1\psi_1)(\phi_1-\phi_2),\bphi^+)-\\
	((f(\phi_1\psi_1) -f(\phi_2\psi_2))\phi_2,\bphi^+)\ge 0,
\end{align*}
  we get   $(\bphi^+,\bphi^+)=0$	 and therefore $\phi_1\le \phi_2$.
\end{proof}
We set 
$$\mY_R=\{\phi\in \mY_0:\phi\ge C_R\quad \forall (x,v)\in B(0,R),\,t\in [0,T] \},$$
where $C_R$ is defined as in \eqref{eq:lower_bound_phi}. 
\begin{proposition}\label{prop:well_posed_eq_psi}
Given $R>R_0$, where $R_0$ as in \eqref{hyp:i_m},  we have
	\begin{itemize}
		\item[(i)] For any $\phi\in\mY_R$, there exists a unique solution $\psi\in\mY_0$ to	
		\begin{equation}\label{eq:equation_psi}
		\left\{
		\begin{array}{ll}
		\pd_t \psi-\frac{\s^2}{2}\Delta_v \psi-b(x)\cdot D_v \psi+v\cdot D_x \psi=\sqi f(x,v,\psi\phi)\psi\\[4pt]
		\psi(0,x,v)=\frac{m_0(x,v)}{\phi(0,x,v)}.
		\end{array}
		\right.
		\end{equation}
		Moreover
		\begin{equation}\label{eq:upper_bound_psi}
		\psi(x,v,t)\le \frac{\|m_0\|_{L^\infty}}{ C_R}\qquad\forall t\in [0,T],\, (x,v)\in \R^d\times\R^d,
		\end{equation}
		where $C_R$ as in \eqref{eq:lower_bound_phi}.
		\item[(ii)] Let $\Psi: \mY_R\to\mY_0$ be the map which associates	
		with $\phi\in \mY_R$ the unique solution of \eqref{eq:equation_psi}.
		Then, if $\phi_2 \le \phi_1$, we have $\Psi(\phi_2)\ge \Psi(\phi_1)$.
	\end{itemize}	
\end{proposition}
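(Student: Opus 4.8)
\emph{Approach and existence in (i).} The plan is to repeat, \emph{mutatis mutandis}, the proof of Proposition~\ref{prop:well_posed_eq_phi}: the only structural differences are that \eqref{eq:equation_psi} is a \emph{forward} kinetic Fokker--Planck equation and that its zeroth order coefficient $\sqi f(x,v,\psi\phi)$ is now $\le 0$, i.e.\ has the sign favourable to the energy method, so the exponential shift used in Proposition~\ref{prop:well_posed_eq_phi} is not needed here. All linear results invoked from \cite[Appendix~A]{d} and the compactness statement \cite[Theorem~1.2]{cep} apply to \eqref{eq:equation_psi} as well, either because they are stated for general linear kinetic Fokker--Planck equations with bounded coefficients, or, if one prefers, after the change of variables $(t,x,v)\mapsto(T-t,x,-v)$ that turns the forward problem into a backward one. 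To prove existence in (i), fix $\phi\in\mY_R$; since $R>R_0$, the lower bound \eqref{eq:lower_bound_phi} gives $\phi(0,\cdot)\ge C_R$ on $\supp m_0$, so $\psi_0:=m_0/\phi(0,\cdot)$ is a non-negative function in $L^\infty\cap L^2(\R^d\times\R^d)$ with $\psi_0\le \|m_0\|_{L^\infty}/C_R$. For $\chi\in L^2([0,T]\times\R^d\times\R^d)$ let $G(\chi)=\psi$ be the weak solution in $\mY$ of the linear forward equation obtained from \eqref{eq:equation_psi} by replacing $\psi\phi$ with $\chi\phi$ in the source, with datum $\psi_0$; by \cite[Prop.~A.2]{d} it exists, is unique, and obeys an estimate of the form \eqref{eq:energy_est} with a constant depending only on $\|\psi_0\|_{L^2}$, $\|f\|_{L^\infty}$ and $\s$, so $G$ maps the closed ball $B_C$ of $L^2$ into itself; continuity and compactness of $G$ on $B_C$ are obtained exactly as for $F$ in Proposition~\ref{prop:well_posed_eq_phi} (energy bound, compactness of the solution set \cite[Theorem~1.2]{cep} to extract a subsequence converging strongly in $L^2$, and passage to the limit in the weak formulation using weak convergence on the left and dominated convergence --- $f\in L^\infty$, $f(x,v,\cdot)$ continuous --- on the right). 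Schauder's theorem then gives a fixed point, a solution $\psi\in\mY$ of \eqref{eq:equation_psi}.

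\emph{Sign, uniqueness and bound in (i).} For $\psi\ge0$: multiply \eqref{eq:equation_psi} by $\psi^-$ and integrate; using \cite[Lemma~A.3]{d}, $\psi(0,\cdot)=\psi_0\ge0$, and $(b\cdot D_v\psi,\psi^-)=0$ (integration by parts, $\diver_v b=0$), one gets
\[
-\half\iint|\psi(T,x,v)^-|^2\,dx\,dv=\frac{\s^2}{2}\|D_v\psi^-\|_{L^2}^2-\sqi(f(\psi\phi)\psi^-,\psi^-).
\]
The right-hand side is $\ge0$ (both summands are, as $f\le0$) and the left-hand side is $\le0$, hence $\|D_v\psi^-\|_{L^2}=0$; since $\psi^-(t,x,\cdot)\in L^2(\R^d_v)$ for a.e.\ $(t,x)$, this forces $\psi^-\equiv0$, so $\psi\in\mY_0$. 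For uniqueness, given solutions $\psi_1,\psi_2$, test the equation for $\bar\psi:=\psi_1-\psi_2$ (whose initial trace vanishes) against $\bar\psi$ to obtain
\[
\half\iint|\bar\psi(T,x,v)|^2\,dx\,dv+\frac{\s^2}{2}\|D_v\bar\psi\|_{L^2}^2=\sqi(f(\psi_1\phi)\psi_1-f(\psi_2\phi)\psi_2,\bar\psi),
\]
whose right-hand side is $\le0$: writing $f(\psi_1\phi)\psi_1-f(\psi_2\phi)\psi_2=f(\psi_1\phi)\bar\psi+(f(\psi_1\phi)-f(\psi_2\phi))\psi_2$, the first term tested against $\bar\psi$ is $\le0$ because $f\le0$, and the second because, $\phi\ge0$, $\psi_2\ge0$ and $f(x,v,\cdot)$ decreasing, $(f(\psi_1\phi)-f(\psi_2\phi))(\psi_1-\psi_2)\le0$; hence $\|D_v\bar\psi\|_{L^2}=0$ and $\bar\psi\equiv0$. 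Finally, the constant function $\|m_0\|_{L^\infty}/C_R$ is a supersolution of the linear problem for every frozen argument (its left-hand side vanishes, while its right-hand side $\sqi f(\cdot)\|m_0\|_{L^\infty}/C_R\le0$ as $f\le0$) and dominates $\psi_0$ on $\R^d\times\R^d$, so the maximum principle \cite[Prop.~A.3]{d} yields \eqref{eq:upper_bound_psi}, first for the linear and then for the nonlinear problem.

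\emph{Monotonicity in (ii), and the main difficulty.} Let $\phi_2\le\phi_1$ in $\mY_R$, set $\psi_i=\Psi(\phi_i)$ and $\bar\psi=\psi_1-\psi_2$; its initial trace $m_0(1/\phi_1(0,\cdot)-1/\phi_2(0,\cdot))$ is $\le0$ since $m_0\ge0$ and $\phi_2\le\phi_1$, so $\bar\psi(0,\cdot)^+\equiv0$. Testing the equation for $\bar\psi$ against $\bar\psi^+$ and integrating,
\[
\half\iint|\bar\psi(T,x,v)^+|^2\,dx\,dv+\frac{\s^2}{2}\|D_v\bar\psi^+\|_{L^2}^2=\sqi(f(\psi_1\phi_1)\psi_1-f(\psi_2\phi_2)\psi_2,\bar\psi^+);
\]
decomposing $f(\psi_1\phi_1)\psi_1-f(\psi_2\phi_2)\psi_2=f(\psi_1\phi_1)\bar\psi+(f(\psi_1\phi_1)-f(\psi_2\phi_2))\psi_2$, the first term against $\bar\psi^+$ is $\le0$ ($f\le0$), and so is the second: on $\{\psi_1>\psi_2\}$ one has $\psi_1\phi_1\ge\psi_2\phi_2$ by $\psi_2\ge0$ (from (i)) and $\phi_1\ge\phi_2\ge0$, hence $f(\psi_1\phi_1)\le f(\psi_2\phi_2)$ by monotonicity, while $\psi_2\ge0$. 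Thus the right-hand side is $\le0$, so $\bar\psi^+\equiv0$, i.e.\ $\psi_1\le\psi_2$, which is $\Psi(\phi_2)\ge\Psi(\phi_1)$. The whole argument is a transcription of Proposition~\ref{prop:well_posed_eq_phi}; the one genuinely new point is the control of the initial datum $m_0/\phi(0,\cdot)$ --- this is where $R>R_0$ and \eqref{eq:lower_bound_phi} are used and why $\Psi$ is defined on $\mY_R$ --- together with the need, in (ii), to combine $\psi_i\ge0$ with $\phi_1\ge\phi_2$ when comparing the two source terms; beyond this sign bookkeeping I expect no real obstacle.
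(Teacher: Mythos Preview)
Your argument is correct, but it differs from the paper's in one systematic way. The paper \emph{does} retain the exponential shift $\tilde\psi=e^{-\lambda t}\psi$ throughout (for the bound \eqref{eq:upper_bound_psi} and for part (ii)), exactly as in Proposition~\ref{prop:well_posed_eq_phi}: this produces an extra $\lambda(\bar\psi^+,\bar\psi^+)$ term in the energy identity, from which $\bar\psi^+\equiv0$ follows immediately. You instead drop the shift, obtain only $\|D_v\bar\psi^+\|_{L^2}=0$ (resp.\ $\|D_v\psi^-\|_{L^2}=0$, $\|D_v\bar\psi\|_{L^2}=0$), and then use the Liouville-type fact that a function in $L^2(\R^d_v)$ with vanishing $v$-gradient must be identically zero. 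This is valid precisely because the velocity domain is the whole space; on a torus or bounded domain your route would fail while the paper's would still work. For the upper bound \eqref{eq:upper_bound_psi} you invoke the maximum principle with the constant barrier $\|m_0\|_{L^\infty}/C_R$ (mirroring what the paper does for $\bar\phi$ in Proposition~\ref{prop:well_posed_eq_phi}), whereas the paper derives it by the same shifted energy method, testing $\psi-e^{-\lambda t}\|m_0\|_{L^\infty}/C_R$ against its positive part. Finally, you make explicit the observation $\bar\psi(0,\cdot)^+\equiv0$ in (ii), which the paper uses tacitly. In short: same skeleton, different device for forcing vanishing --- the paper's $\lambda$-shift is domain-independent, your $D_v$-Liouville argument is leaner here but tied to $\R^d_v$.
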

\begin{proof}
First observe that, since  $R>R_0$,  then    $\psi(0,x,v)$ is well defined for $\phi\in\mY_R$.
The proof  of the first part of $(i)$ is very similar to the one of the corresponding result in Proposition \ref{prop:well_posed_eq_phi}, hence we only prove the bound \eqref{eq:upper_bound_psi}. 
If $\psi$ is a solution  of \eqref{eq:equation_psi}, then $\tilde \psi=e^{-\l t}\psi$ is a solution of
\begin{equation} \label{eq:equation_psi_equiv}
	\pd_t \tilde\psi-\frac{\s^2}{2}\Delta_v \tilde\psi-b(x)\cdot D_v \tilde\psi+v\cdot D_x \psi+\l\tilde\psi=\sqi f(x,v,e^{\l t}\tilde\psi\phi)\psi.
\end{equation}
Let $\psi$ be a solution of \eqref{eq:equation_psi_equiv},  set $\bpsi=\psi- e^{-\l t} \|m_0\|_{L^\infty}/ C_R$ and observe that $\bpsi(0)\le 0$. Multiply the equation
for  $\bpsi$ by $\bpsi^+$ and integrate to obtain
\begin{align*}
	(\psi f(e^{\l t}\psi\phi),\bpsi^+)=\\ \langle\pd_t\bpsi+v\cdot D_x\bpsi,\bpsi^+\rangle+\sqi(D_v\bpsi,D_v\bpsi^+)-(b(x)D_v\bpsi,\bpsi^+)+\l (\bpsi,\bpsi^+)\ge\\
	\iint|\bpsi^+(x,v,T)|^2dxdv+\l (\bpsi^+,\bpsi^+)\ge
	 \l(\bpsi^+,\bpsi^+).
\end{align*}
Since $\psi\ge 0$ and $f\le 0$, we have
\[(\psi f(e^{\l t}\psi\phi),\bpsi^+)\le 0\]
and therefore $\bpsi^+\equiv 0$. Hence the upper bound \eqref{eq:upper_bound_psi}.\\
Now we prove {\it (ii)}.  Set $\psi_i=\Psi(\phi_i)$, $i=1,2,$ and   $\bpsi=e^{-\l t}\psi_1-e^{-\l t}\psi_2$. Multiply the equation satisfied by $\bpsi$ by  $\bpsi^+$ and integrate. Since, by monotonicity and negativity of $f$, we have
\begin{align*}
	(f(e^{\l t}\phi_1\psi_1)\psi_1-f(e^{\l t}\phi_2\psi_2)\psi_2,\bpsi^+)=
	(f(e^{\l t}\phi_1\psi_1) (\psi_1-\psi_2),\bpsi^+)+\\
	(\psi_2(f(e^{-\l t}\phi_1\psi_1 ) -f(e^{-\l t}\phi_2\psi_2 )),\bpsi^+)\le 0.
\end{align*}
Then
\begin{align*}
	0\ge \langle\pd_t\bpsi+v\cdot D_x\bpsi,\bpsi^+\rangle+\sqi(D_v\bpsi,D_v\bpsi^+)-(b(x)D_v\bpsi,\bpsi^+)+\l (\bpsi,\bpsi^+)\ge\\
	\iint|\bpsi^+(x,v,T)|^2dxdv+\l (\bpsi^+,\bpsi^+)\ge 
	\l(\bpsi^+,\bpsi^+).
\end{align*}
Hence $\bpsi^+\equiv 0$ and therefore $\psi_1\le\psi_2$.

\end{proof}
\begin{proof}[Proof of Theorem \ref{thm:main}]
Given   $\psi^{(0)}\equiv 0$, consider the sequence $(\phi^{(k+\half)},\psi^{(k+1)})$, $k\in\N$, defined in \eqref{eq:iterate_phi}-\eqref{eq:iterate_psi}. It can rewritten as   
\begin{equation}\label{eq:iterative_seq}
\left\{\ 
\begin{array}{l}
\phi^{(k+\half)}=\Phi(\psi^{(k)})\\[6pt]
\psi^{(k+1)}=\Psi(\phi^{(k+\half)})
\end{array}
\right.
\end{equation}
where the maps $\Phi$, $\Psi$ are   as in Propositions \ref{prop:well_posed_eq_phi} and, respectively,  
\ref{prop:well_posed_eq_psi}. Observe that, by \eqref{eq:lower_bound_phi}, we have $\phi^{(k+\half)} \in\mY_R$ for $R>R_0$   and $\psi^{(k+1)}\ge 0$ for any $k$. Hence the sequence $(\phi^{(k+\half)},\psi^{(k+1)})$ is well defined.
We first prove by induction the monotonicity of the components of $(\phi^{(k+\half)},\psi^{(k+1)})$. By   non negativity of solutions to \eqref{eq:equation_psi}, we have  $\psi^{(1)}=\Phi(\phi^{(\half)})\ge 0$ and therefore $\psi^{(1)}\ge\psi^{(0)}$. Moreover, by the monotonicity of $\Phi$, $\phi^{(\frac 3 2)}=\Phi(\psi^{(1)})\le \Phi(\psi^{(0)})=\phi^{(\half)}$. Now assume that $\psi^{(k+1)}\ge \psi^{(k)}$. Then 
\[ \phi^{(k+\frac 3 2)}=\Phi(\psi^{(k+1)})\le \Phi(\psi^{(k)})=\phi^{(k+\half)}\]
and  
\[ \psi^{(k+2)}  =\Psi  (\phi^{(k+\frac 3 2)}) \ge  \Psi  (\phi^{(k+\half)})=\psi^{(k+1)},  \]
therefore the monotonicity of two sequences.\\
Since $\phi^{(k+\half)}\ge 0$ and, by \eqref{eq:upper_bound_psi}, for $k\to\infty$, the sequence $\psi^{(k+1)}\le \|m_0\|_{L^\infty}/C_R$,   $(\phi^{(k+\half)},\psi^{(k+1)})$ converges a.e. and in $L^2([0,T]\times \R^d\times\R^d)$ to a couple $(\phi,\psi)$. Taking into account the estimate \eqref{eq:energy_est}, the a.e. convergence of the two sequences and repeating an argument similar to the one employed for the continuity of the map $F$ in Proposition \ref{prop:well_posed_eq_phi}, we   get that the couple  $(\phi,\psi)$ satisfies, in weak sense, the first two equations in \eqref{eq:int_MFG_kinetic}. The terminal condition for $\phi$ is obviously satisfied, while the initial condition for $\psi$, in $L^2$ sense, follows  by convergence of $\phi^{(k+\half)}(0)$ to $\phi(0)$.\par
We now consider the couple $(u,m)$ given by the change of variable in \eqref{eq:change}. We first observe that, by \cite[Theorem 1.5]{bou}, we have $\pd_t\phi+v\cdot D_x\phi$, $D_v\phi$, $\Delta_v\phi\in L^2([0,T]\times \R^d\times\R^d)$ and a corresponding regularity for $\psi$. Taking into account the boundedness of $\phi$  and the estimate in \eqref{eq:lower_bound_phi}, we have that $u$, $\pd_t u+v\cdot D_x u$, $D_v u$, $\Delta_v u\in L^2_{loc}([0,T]\times \R^d\times\R^d)$. Hence we can write the equation for $u$ in weak form, i.e.
\[
	(\pd_t u+v\cdot D_x u,w )- 
	\frac{\s^2}{2}(D_vu,D_vw) -(b\cdot D_vu,w)+\half(|D_vu|^2,w)=-(   f(m),w),
\]
for any $w\in \mD([0,T]\times \R^d\times\R^d)$, with final datum in trace sense. In a similar way,  since $m$, $\pd_t m+v\cdot D_x m$, $D_v m$, $\Delta_v m\in L^2_{loc}([0,T]\times \R^d\times\R^d)$ and $m$ is locally bounded, we can   rewrite also the equation for $m$ in weak form, i.e.
\[
(\pd_t m+v\cdot D_x m,w )+
\frac{\s^2}{2}(D_vm,D_vw) -(b\cdot D_vm,w)-(mD_vu,Dw)=0,
\]
for any $w\in \mD([0,T]\times \R^d\times\R^d)$  with the initial datum in trace sense.
\end{proof}
\noindent{\bf Acknowledgements.}  The author  wishes to thank  Alessandro Goffi (Univ. di Padova)  and
Sergio Polidoro (Univ. di Modena e Reggio Emilia) for useful discussions.

	
\begin{flushright}
	\noindent \verb"fabio.camilli@uniroma1.it"\\
	Dip. di Scienze di base e  applicate per l'Ingegneria\\
	 Sapienza Universit\`{a} di Roma\\
	via A.Scarpa 14, 00161 Roma (Italy)	\\
	
\end{flushright}

\end{document}